\newtheorem{theorem}{Theorem}%[section]
\newtheorem{corollary}[theorem]{Corollary}
\newcommand{\Z}{\mathbb{Z}}
\begin{document}

\title[Divisors, Chebyshev, and Quasi-modular Forms]{MacMahon's sum-of-divisors functions, Chebyshev polynomials, and Quasi-modular forms}
\author{George E. Andrews}
\address{Penn State University, USA}
\email{andrews@math.psu.edu}

\author{Simon C. F. Rose}
\address{University of British Columbia, Canada}
\email{scfr@math.ubc.ca}

\begin{abstract}
We investigate a relationship between MacMahon's generalized sum-of-divisors functions and Chebyshev polynomials of the first kind. This determines a recurrence relation to compute these functions, as well as proving a conjecture of MacMahon about their general form by relating them to quasi-modular forms. These functions arise as solutions to a curve-counting problem on Abelian surfaces.
\end{abstract}

\maketitle

\section{Introduction}

The sum-of-divisors function $\sigma_k(n)$ is defined to be
\[
\sigma_k(n) = \sum_{d \mid n} d^k.
\]
For $k = 1$, this has as a generating function
\[
A_1(q) = \sum_{k=1}^\infty \sigma_1(n)q^n = \sum_{k=1}^\infty \frac{q^k}{(1 - q^k)^2}.
\]
As a generalization of this notion, MacMahon introduces in the paper \cite[pp. 303, 309]{MacMahon} the generating functions
\begin{gather*}
A_k = \sum_{0<m_1 < \cdots < m_k}\frac{q^{m_1 + \cdots + m_k}}{(1 - q^{m_1})^2 \cdots (1 - q^{m_k})^2} \\
C_k = \sum_{0<m_1 < \cdots < m_k}\frac{q^{2m_1 + \cdots +2m_k - k}}{(1 - q^{2m_1 - 1})^2 \cdots (1 - q^{2m_k - 1})^2}.
\end{gather*}
These provide generalizations in the following sense.

Fix a positive integer $k$. We define $a_{n,k}$ to be the sum
\[
a_{n,k} = \sum s_1 \cdots s_k
\]
where the sum is taken over all possible ways of writing $n = s_1m_1 + \cdots + s_km_k$ with $0 < m_1 < \cdots < m_k$. Note that for $k = 1$ this is nothing but $\sigma_1(n)$, the usual sum-of-divisors function. It can then be shown that we have
\[
A_k(q) = \sum_{n = 1}^\infty a_{n,k}q^n.
\]

Similarly, we define $c_{n,k}$ to be
\[
c_{n,k} = \sum s_1 \cdots s_k
\]
where the sum is over all partitions of $n$ into
\[
n = s_1(2m_1 - 1) + \cdots + s_k(2m_k - 1)
\]
with, as before $0 < m_1 < \cdots < m_k$. For $k = 1$ this is the sum over all divisors whose conjugate is an odd number. As for the case of $a_{n,k}$, we have
\[
C_k(q) = \sum_{n=1}^\infty c_{n,k}q^n.
\]

We recall also that Chebyshev polynomials \cite[p. 101]{aar} are defined via the relation
\[
T_n(\cos \theta) = \cos(n\theta).
\]
With these we form the following generating functions.
\begin{gather*}
F(x,q) := 2\sum_{n=0}^\infty T_{2n+1}(\tfrac{1}{2}x)q^{n^2+n} \\
G(x,q) := 1+2\sum_{n=1}^\infty T_{2n}(\tfrac{1}{2}x)q^{n^2}.
\end{gather*}

The results of this paper are the following.

\begin{theorem}\label{thm_main}
We have the following equalities:
\begin{gather*}
F(x,q)  = (q^2;q^2)_\infty^3 \sum_{k=0}^\infty A_k(q^2)x^{2k+1}\\
G(x,q) = \frac{(q;q)_\infty}{(-q;q)_\infty}\sum_{k=0}^\infty C_k(q)x^{2k}
\end{gather*}
where $(a;q)_\infty = \prod_{k=0}^\infty (1 - aq^k)$.
\end{theorem}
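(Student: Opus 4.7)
The plan is to substitute $x = z + z^{-1}$ (formally; one may think of $z = e^{i\theta}$ and $x = 2\cos\theta$), so that $T_n(x/2) = (z^n + z^{-n})/2$. Both Chebyshev generating functions then become Jacobi theta functions: after extending each sum symmetrically to negative $n$, a short calculation gives
\[
F(x,q) = \sum_{n\in\Z} z^{2n+1} q^{n(n+1)}, \qquad G(x,q) = \sum_{n\in\Z} z^{2n} q^{n^2}.
\]
The Jacobi triple product then converts each into an infinite product in $z$, $z^{-1}$, and $q$, which I will match against the right-hand sides.

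For the $F$ identity: writing $F(x,q) = z \sum_{n\in\Z}(z^2)^n q^{n^2+n}$ and applying Jacobi triple product in the form $\sum_n w^n q^{n^2+n} = \prod_{n\ge 1}(1-q^{2n})(1+wq^{2n})(1+w^{-1}q^{2n-2})$ with $w = z^2$, the outer $z$ combines with the $n=1$ term $(1+z^{-2})$ of one resulting product to give $z + z^{-1} = x$, yielding after reindexing
\[
F(x,q) = x \prod_{n\geq 1}(1-q^{2n})(1+z^2 q^{2n})(1+z^{-2}q^{2n}).
\]
For the other side, expanding the product over finite subsets of positive integers shows immediately from the definition of $A_k$ that
\[
\sum_{k\geq 0} A_k(q^2) x^{2k} = \prod_{m\geq 1}\left(1 + \frac{x^2 q^{2m}}{(1-q^{2m})^2}\right).
\]
The elementary identity $(1+z^2 q^{2m})(1+z^{-2}q^{2m}) = (1-q^{2m})^2 + x^2 q^{2m}$, which follows from $x^2 - 2 = z^2 + z^{-2}$, then matches the two products after pulling out $(q^2;q^2)_\infty^3$.

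The $G$ identity follows the same recipe: Jacobi triple product converts $G$ directly into $\prod_{n\geq 1}(1-q^{2n})(1+z^2 q^{2n-1})(1+z^{-2}q^{2n-1})$, the infinite-product expansion gives $\sum_k C_k(q) x^{2k} = \prod_{m\geq 1}\left(1 + \frac{x^2 q^{2m-1}}{(1-q^{2m-1})^2}\right)$, and the same $(1+z^2 \cdot)(1+z^{-2}\cdot)$ factorization reduces matters to the elementary identity $(q^2;q^2)_\infty (q;q^2)_\infty^2 = (q;q)_\infty/(-q;q)_\infty$. This in turn follows from the even/odd split $(q;q)_\infty = (q;q^2)_\infty (q^2;q^2)_\infty$ together with $(q;q)_\infty(-q;q)_\infty = (q^2;q^2)_\infty$.

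I do not anticipate any serious obstacle: the two key insights — that the Chebyshev sums are secretly Jacobi theta functions under $x = z + z^{-1}$, and that the MacMahon series are infinite products indexed by subsets of positive integers — are both one-line observations, after which everything is bookkeeping. The main care needed is picking the correct form of the Jacobi triple product (the exponent $n^2+n$ in $F$ forces a shift from the standard $n^2$ form) and verifying the prefactor identity for $G$.
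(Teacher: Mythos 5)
Your proposal is correct and follows essentially the same route as the paper: the substitution $x = z + z^{-1}$ (the paper writes $x = 2\cos\theta$ with $z = e^{i\theta}$), the symmetric extension of the Chebyshev sums to theta functions over $\Z$, the Jacobi triple product, the factorization $(1+z^{\pm 2}q^{m})$-pair into $(1-q^m)^2 + x^2q^m$, and the expansion of the resulting infinite product to recover $A_k$ and $C_k$. The only cosmetic difference is that you make explicit the reindexing that absorbs the $n=1$ factor into $x$ and the eta-quotient identity for the $G$ prefactor, both of which the paper leaves implicit.
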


\begin{corollary}\label{explicit_form}
The functions $A_k(q)$ and $C_k(q)$ can be written as
\begin{gather*}
A_k(q) = \frac{(-1)^k}{(2k+1)!(q;q)_\infty^3}\sum_{n=0}^\infty (-1)^n(2n+1)\frac{(n+k)!}{(n-k)!}q^{\tfrac{1}{2}n(n+1)} \\
C_k(q) = \frac{(-1)^k(-q;q)^\infty}{(2k)!(q;q)_\infty}\sum_{n=1}^\infty (-1)^n 2n \frac{(n+k-1)!}{(n-k)!}q^{n^2}.
\end{gather*}
\end{corollary}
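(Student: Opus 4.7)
My plan is to derive Corollary~\ref{explicit_form} directly from Theorem~\ref{thm_main} by extracting coefficients of powers of $x$ on both sides. Since Theorem~\ref{thm_main} already identifies $A_k(q^2)$ and $C_k(q)$ (up to infinite product prefactors) as the coefficients of $x^{2k+1}$ and $x^{2k}$ in $F(x,q)$ and $G(x,q)$ respectively, it is enough to compute $[x^{2k+1}]F(x,q)$ and $[x^{2k}]G(x,q)$ directly from the Chebyshev definitions.

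The key tool is the standard polynomial expression
\[
T_m(x/2) = \frac{m}{2}\sum_{j=0}^{\lfloor m/2\rfloor}\frac{(-1)^j}{m-j}\binom{m-j}{j}x^{m-2j},
\]
which I would either quote from the reference \cite{aar} or verify by induction via the recursion $T_{m+1}(y) = 2yT_m(y) - T_{m-1}(y)$. Setting $m = 2n+1$ and requiring $m - 2j = 2k+1$ forces $j = n-k$, so only one term in the inner sum survives; a short simplification then gives
\[
[x^{2k+1}]T_{2n+1}(x/2) = \frac{(2n+1)(-1)^{n-k}(n+k)!}{2\,(n-k)!\,(2k+1)!}.
\]

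Substituting this into $F(x,q) = 2\sum_{n\ge 0}T_{2n+1}(x/2)q^{n(n+1)}$ and dividing by $(q^2;q^2)_\infty^3$ via Theorem~\ref{thm_main} yields the formula for $A_k(q^2)$; replacing $q^2$ by $q$ throughout gives the stated expression for $A_k(q)$. (Terms with $n<k$ are automatically zero under the convention $1/(n-k)!=0$ for negative argument, so the sum may be taken from $n=0$.) The derivation for $C_k(q)$ is essentially identical: with $m = 2n$ and $m - 2j = 2k$ one obtains
\[
[x^{2k}]T_{2n}(x/2) = \frac{n(-1)^{n-k}(n+k-1)!}{(n-k)!\,(2k)!},
\]
and plugging into $G(x,q) = 1 + 2\sum_{n\ge 1}T_{2n}(x/2)q^{n^2}$ and multiplying by $(-q;q)_\infty/(q;q)_\infty$ finishes the argument.

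There is no real obstacle here; the whole corollary is a mechanical coefficient extraction once the explicit polynomial form of $T_m$ is in hand. The one subtlety worth a sentence of care is the case $k=0$ for $C_k$, where the formula must be reconciled with the convention $C_0=1$ (arising from the empty product in the definition of $c_{n,0}$); this amounts to Jacobi's identity $\sum_{n\in\Z}(-1)^n q^{n^2} = (q;q)_\infty/(-q;q)_\infty$, which can be invoked directly.
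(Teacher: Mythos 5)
Your proposal is correct and follows essentially the same route as the paper: extract the coefficient of $x^{2k+1}$ (resp.\ $x^{2k}$) from the Chebyshev expansions of $F$ and $G$ and divide by the infinite-product prefactors supplied by Theorem~\ref{thm_main}; your explicit Chebyshev coefficients agree with the ones the paper states without proof. Your closing remark about the $k=0$ case of $C_k$ (the missing constant term $1$ in the stated sum) is a legitimate minor caveat that the paper glosses over.
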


\begin{corollary}\label{divisor_recurrence}
The functions $A_k$ and $C_k$ satisfy the recurrence relations
\begin{gather*}
A_k(q) = \frac{1}{(2k+1)2k}\Big(\big(6A_1(q) + k(k-1)\big)A_{k-1}(q) - 2q\frac{d}{dq}A_{k-1}(q)\Big) \\
C_k(q) = \frac{1}{2k(2k-1)}\Big(\big(2C_1(q) + (k-1)^2\big)C_{k-1}(q) - q\frac{d}{dq}C_{k-1}(q)\Big).
\end{gather*}
\end{corollary}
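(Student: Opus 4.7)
The plan is to combine Theorem \ref{thm_main} with the classical second-order differential equation $(1-y^2)T_n''(y) - yT_n'(y) + n^2 T_n(y) = 0$ satisfied by every Chebyshev polynomial of the first kind. After substituting $y = x/2$ and clearing denominators, this becomes
\[
(4-x^2)\partial_x^2 T_n(x/2) - x\partial_x T_n(x/2) + n^2 T_n(x/2) = 0.
\]
Applying the operator $L_x = (4-x^2)\partial_x^2 - x\partial_x$ termwise to the sums defining $F$ and $G$ and observing that $(2n+1)^2 = 4n(n+1)+1$ while $(2n)^2 = 4n^2$, the $n$-dependence in the eigenvalues is absorbed by $q\partial_q$ acting on the exponents of $q^{n^2+n}$ or $q^{n^2}$. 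This yields the two linear PDEs
\[
L_x F + F + 4 q F_q = 0, \qquad L_x G + 4 q G_q = 0.
\]

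Next, substitute the expansions from Theorem \ref{thm_main}: write $F = E(q)\Phi(x,q)$ with $E(q)=(q^2;q^2)_\infty^3$ and $\Phi = \sum_{k\geq 0} A_k(q^2) x^{2k+1}$, and $G = H(q)\Psi(x,q)$ with $H(q)=(q;q)_\infty/(-q;q)_\infty$ and $\Psi = \sum_{k\geq 0} C_k(q) x^{2k}$. Since $L_x$ acts only in $x$, dividing each PDE by its prefactor produces a PDE for $\Phi$ or $\Psi$ in which the logarithmic $q$-derivative of the prefactor appears as a coefficient:
\[
L_x \Phi + \bigl(1+4qE_q/E\bigr)\Phi + 4q\Phi_q = 0, \quad L_x \Psi + \bigl(4qH_q/H\bigr)\Psi + 4q\Psi_q = 0.
\]
Extracting the coefficient of $x^{2k+1}$ in the first (respectively $x^{2k}$ in the second), using the elementary identity $(2k+1)^2 - 1 = 2k(2k+2)$ together with the reindexing that brings the $x^{2k-1}$ contribution from $L_x$ into the $x^{2k+1}$ coefficient, and finally performing the shift $k\mapsto k-1$, produces recurrences in exactly the form claimed, modulo identifying $qE_q/E$ and $qH_q/H$.

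This identification is the classical input. From $\log(q;q)_\infty = \sum_{n\geq 1}\log(1-q^n)$ one gets $q\partial_q\log(q;q)_\infty = -\sum_{n\geq 1} nq^n/(1-q^n) = -A_1(q)$, and applying this to $E(q) = (q^2;q^2)_\infty^3$ yields $qE_q/E = -6A_1(q^2)$, which delivers the coefficient $6$ in the $A_k$ recurrence. For $G$, a short manipulation using $\tfrac{1}{1-q^n}+\tfrac{1}{1+q^n} = \tfrac{2}{1-q^{2n}}$ reduces $q\partial_q\log H(q)$ to $-\sum_{n\geq 1}2nq^n/(1-q^{2n})$, whose coefficient of $q^N$ is $2\sum_{d\mid N,\, N/d\text{ odd}} d = 2c_{N,1}$, hence $qH_q/H = -2C_1(q)$. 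The main obstacle is purely bookkeeping: the index shifts must be tracked carefully, and in the $A_k$ case the change of variable $Q = q^2$ converts $q\partial_q$ into $2Q\partial_Q$, so that the factor of $2$ in front of $q\tfrac{d}{dq}A_{k-1}(q)$ in the corollary emerges correctly. Once these normalizations are lined up, both recurrences fall out immediately.
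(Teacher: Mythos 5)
Your argument is correct, and it reaches the recurrences by a genuinely different mechanism than the paper. The paper first establishes Corollary \ref{explicit_form} via the explicit coefficient formula for Chebyshev polynomials, writes $f_k(q) = \frac{(-1)^k}{(2k+1)!}D_k\cdots D_1 f_0(q)$ with $D_\ell = q\frac{d}{dq}-\ell(\ell-1)$ (and similarly $g_k$ with $D'_\ell = q\frac{d}{dq}-(\ell-1)^2$), reads off the one-step recursion $f_k = \frac{-1}{(2k+1)2k}D_k f_{k-1}$ directly from the factorial ratios, and only then divides out the infinite-product prefactors. You instead encode the same $k$-recursion globally as the Chebyshev differential equation, obtaining the PDEs $L_xF+F+4qF_q=0$ and $L_xG+4qG_q=0$, and extract coefficients of $x^{2k+1}$ and $x^{2k}$ at the end; this bypasses the explicit coefficient formula (and hence Corollary \ref{explicit_form}) entirely, at the cost of slightly heavier bookkeeping in the index shift coming from the $(4-x^2)\partial_x^2$ term. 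Both routes require the same final input, namely $q\frac{d}{dq}\log(q;q)_\infty = -A_1(q)$ giving $qE_q/E=-6A_1(q^2)$ and $qH_q/H=-2C_1(q)$; the paper leaves this step implicit in the phrase ``noting again that $f_k(q)=(q^2;q^2)_\infty^3A_k(q^2)$,'' whereas you carry it out explicitly, including the correct conversion $q\partial_q = 2Q\partial_Q$ under $Q=q^2$ that produces the factor $2$ in front of $q\frac{d}{dq}A_{k-1}$. Your identification of the coefficient of $q^N$ in $\sum 2nq^n/(1-q^{2n})$ with $2c_{N,1}$ is also correct, since $c_{N,1}$ sums the divisors $d$ of $N$ with $N/d$ odd. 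All signs and constants check out; the two derivations are equivalent in substance but yours is self-contained given only Theorem \ref{thm_main}.
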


Our final result settles a long-standing conjecture of MacMahon. In MacMahon's paper \cite[p. 328]{MacMahon} he makes the claim
\begin{quote}
The function $A_k = \sum a_{n,k}q^n$ has apparently the property that the coefficient $a_{n,k}$ is expressible as a linear function of the sum of the uneven powers of the divisors of $n$. I have not succeeded in reaching the general theory...
\end{quote}
What we prove is the following.

\begin{corollary}
The functions $A_k(q)$ are in the ring of quasi-modular forms.
\end{corollary}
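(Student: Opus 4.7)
The plan is to prove this by induction on $k$ using the recurrence from Corollary \ref{divisor_recurrence}, together with two standard structural facts about the ring $\widetilde{M}_*$ of quasi-modular forms on $SL_2(\Z)$: it is closed under multiplication, and it is closed under the Ramanujan operator $D = q\frac{d}{dq}$ (which raises weight by $2$ and sends $\widetilde{M}_*$ into itself).

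For the base case $k=1$, the weight-$2$ Eisenstein series
\[
E_2(q) = 1 - 24\sum_{n\geq 1}\sigma_1(n)q^n
\]
is the prototypical generator of the genuinely quasi-modular part of $\widetilde{M}_*$. Since $A_1(q) = \sum_{n\geq 1}\sigma_1(n)q^n = (1 - E_2(q))/24$, we obtain $A_1 \in \widetilde{M}_*$ immediately.

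For the inductive step, assume $A_{k-1} \in \widetilde{M}_*$. The recurrence
\[
A_k = \frac{1}{(2k+1)(2k)}\Big((6A_1 + k(k-1))A_{k-1} - 2\,D A_{k-1}\Big)
\]
expresses $A_k$ as a sum of three terms: the product $A_1 \cdot A_{k-1}$, which lies in $\widetilde{M}_*$ by closure under multiplication; the scalar multiple $k(k-1)A_{k-1}$, which is manifestly in $\widetilde{M}_*$; and $DA_{k-1}$, which lies in $\widetilde{M}_*$ by closure under $D$. Therefore $A_k \in \widetilde{M}_*$, completing the induction.

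The main obstacle is essentially nonexistent once Corollary \ref{divisor_recurrence} is in hand; the argument is purely formal, relying only on the ring-plus-derivation structure of $\widetilde{M}_*$. The one minor subtlety worth recording is that the $A_k$ are not homogeneous: tracking weights through the recurrence shows that $A_1 \cdot A_{k-1}$ and $DA_{k-1}$ each contribute in weight $2k$, whereas the term $k(k-1)A_{k-1}$ contributes in weight $2(k-1)$. This is harmless since we only claim membership in $\widetilde{M}_*$, but it also gives the finer statement that $A_k$ is a sum of quasi-modular forms of weights $2j$ for $1 \leq j \leq k$, matching the phenomenon MacMahon observed in the small cases.
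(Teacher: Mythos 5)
Your argument is correct and is essentially the paper's own proof: both establish the base case via $A_1 = (1-E_2)/24$ and then induct using the recurrence of Corollary \ref{divisor_recurrence}, invoking the facts that the ring of quasi-modular forms is closed under multiplication and under $q\frac{d}{dq}$. (Your closing aside on weights is slightly imprecise, since $A_1$ itself mixes weights $0$ and $2$, but it plays no role in the proof.)
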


We will also discuss in section \ref{section_applications} some applications of this result to an enumerative problem involving counting curves on abelian surfaces.

\section{Proofs}

\begin{proof}[Proof of theorem \ref{thm_main}]
Beginning with the series $F(x,q)$, and letting $x = 2\cos\theta$ we find
\begin{align*}
F(x,q) &= 2\sum_{n=0}^\infty T_{2n+1}(\cos\theta)q^{n^2+n}\\
 &= 2\sum_{n=0}^\infty \cos\big((2n+1)\theta\big)q^{n^2+n}\\
 &= \sum_{n=0}^\infty \Big(e^{i(2n+1)\theta} + e^{-i(2n+1)\theta}\Big)q^{n^2+n}\\
 &= \sum_{n=0}^\infty e^{i(2n+1)\theta}e^{n^2+n} + \sum_{n=0}^\infty e^{-i{2n+1}\theta}q^{n^2+n}
\end{align*}
where in the latter sum, letting $n \mapsto -n - 1$ we obtain
\begin{equation*}
F(x,q) = e^{i\theta}\sum_{n=-\infty}^\infty e^{2ni\theta}q^{n^2+n}.
\end{equation*}
Using the Jacobi triple product \cite[p. 497, Thm 10.4.1]{aar} we see that this is equal to
\begin{align*}
e^{i\theta}\sum_{n=-\infty}^\infty e^{2ni\theta}q^{n^2+n} &= e^{i\theta}(-e^{-2i\theta};q^2)_\infty(-q^2e^{2i\theta};q^2)_\infty(q^2;q^2)_\infty \\
 &= \underbrace{(e^{i\theta} + e^{-i\theta})}_x(q^2;q^2)_\infty \prod_{m=1}^\infty\big(q + \underbrace{2\cos(2\theta)}_{x^2 - 2}q^{2m} + q^{4m}\Big) \\
 &= x(q^2;q^2)_\infty \prod_{m=1}^\infty \big((1 - q^{2m})^2 + x^2q^{2m}\big) \\
 &= x(q^2;q^2)_\infty^3\prod_{m=1}^\infty \Big(1 + x^2\frac{q^{2m}}{(1 - q^{2m})^2}\Big) \\
 &= (q^2;q^2)_\infty^3 \sum_{k=0}^\infty A_k(q^2)x^{2k+1}
\end{align*}
and thus comparing coefficients of $x^{2k+1}$ yeilds the result.

We ply a similar trick for $G(x,k)$. In that case we have
\begin{align*}
G(x,q) &= 1 + 2\sum_{n > 0} T_{2n}(\cos\theta)q^{n^2} \\
 &= 1 + 2\sum_{n>0}\cos(2n\theta)q^{n^2} \\
 &= \sum_{n=-\infty}^\infty e^{2ni\theta}q^{n^2}
\end{align*}
which, again, by the Jacobi triple product yields
\begin{align*}
\sum_{n=-\infty}^\infty e^{2ni\theta}q^{n^2} &= (q^2;q^2)_\infty (-qe^{2i\theta};q^2)_\infty(-qe^{-2i\theta};q^2)_\infty \\
 &= (q^2;q^2)_\infty \prod_{m=1}^\infty \big(1 + \underbrace{2\cos(2\theta)}_{x^2 - 2}q^{2m-1} + q^{4m - 2}\big) \\
 &= (q^2;q^2)_\infty \prod_{m=1}^\infty \big((1 - q^{2m-1})^2 + x^2q^{2n-1}\big) \\
 &= \underbrace{(q^2;q^2)_\infty(q;q^2)_\infty^2}_{\frac{(q;q)_\infty}{(-q;q)_\infty}}\prod_{m=1}^\infty\Big(1 + x^2\frac{q^{2m-1}}{(1 - q^{2m-1})^2}\Big) \\
 &= \frac{(q;q)_\infty}{(-q;q)_\infty} \sum_{k=0}^\infty C_k(q)x^{2k}
\end{align*}
which completes the theorem. 
\end{proof}

To deduce Corollary \ref{explicit_form}, we begin by expanding the series $F(x,q)$ (and similarly, $G(x,q)$) in powers of $x$, i.e.
\begin{gather*}
F(x,q) = xf_0(q) + x^3f_1(q) + x^5f_2(q) + \cdots + x^{2k+1}f_k(q) + \cdots \\
G(x,q) = g_0(q) + x^2 g_1(q) + x^4 g_2(q) + \cdots + x^{2k}g_k(q) + \cdots.
\end{gather*}

Now, it can be shown that the coefficients of $x^{2k}$ in $2T_{2n}(\tfrac{1}{2}x)$ and of $x^{2k+1}$ in $2T_{2n+1}(\tfrac{1}{2}x)$ are respectively given by
\[
2n(-1)^{n-k}\frac{(n+k-1)!}{(n-k)!(2k)!} \qquad \qquad (-1)^{n-k}(2n+1)\frac{(n+k)!}{(n-k)!(2k+1)!}
\]
and thus we have
\begin{gather*}
f_k(q) = \frac{(-1)^k}{(2k+1)!}\sum_{n=0}^\infty (-1)^n(2n+1)\frac{(n+k)!}{(n-k)!}q^{n^2+n} \\
g_k(q) = \frac{(-1)^k}{(2k)!}2\sum_{n=1}^\infty (-1)^n n \frac{(n + k - 1)!}{(n-k)!}q^{n^2}.
\end{gather*}
As theorem \ref{thm_main} implies that $f_k(q) = (q^2;q^2)_\infty^3 A_k(q^2)$ and $g_k(q) = \frac{(q;q)_\infty}{(-q;q)_\infty}C_k(q)$, we see that Corollary \ref{explicit_form} follows.

Next, letting
\begin{gather*}
f_0(q) = \sum_{n=0}^\infty (-1)^n(2n+1)q^{n^2+n} = (q^2;q^2)_\infty^3\\
g_0(q) = 1 + 2\sum_{n=1}^\infty (-1)^n q^{n^2} = \frac{(q;q)_\infty}{(-q;q)_\infty}
\end{gather*}
and defining the operators $D_\ell = q\frac{d}{dq} - \ell(\ell - 1)$ and $D'_\ell = q\frac{d}{dq} - (\ell - 1)^2$, we then have that
\begin{gather*}
f_k(q) = \frac{(-1)^k}{(2k+1)!}D_k \cdots D_1 f_0(q) \\
g_k(q) = \frac{(-1)^k}{(2k)!}D'_k \cdots D'_1 g_0(q).
\end{gather*}

From these formulae we note that the functions $f_k$, $g_k$ satisfy the recursion relations
\begin{gather*}
f_k(q) = \frac{-1}{(2k+1)2k}\Big(q\frac{d}{dq} - k(k-1)\Big)f_{k-1}(q) \\
g_k(q) = \frac{-1}{2k(2k-1)}\Big(q\frac{d}{dq} - (k-1)^2\Big)g_{k-1}(q).
\end{gather*}

Noting again that $f_k(q) = (q^2;q^2)_\infty^3 A_k(q^2)$ and $g_k(q) = \frac{(q;q)_\infty}{(-q;q)_\infty}C_k(q)$, we now obtain the recurrence relation of Corollary \ref{divisor_recurrence} between the functions $A_k(q)$ and $C_k(q)$.

Our final result requires a bit of explanation. It is well known that the ring of modular forms for the full modular group $\Gamma = PSL_2(\Z)$ is the polynomial ring in the generators $E_4$, $E_6$, where
\[
E_{2k}(q) = 1 + \frac{2}{\zeta(1 - 2k)}\sum_{n=1}^\infty \sigma_{2k-1}(n)q^n
\]
are the Eisenstein series of weights $2k$. There are no modular forms of weight 2 for $\Gamma$, but $E_2 = 1 -24\sum_{n=1}^\infty \sigma_1(n)q^n$ is a {\em quasi}-modular form (See \cite{quasimodular}).

The relevant fact for this paper is that the ring of all such objects (which contains the ring of modular forms as a subring) is the ring generated either by $E_2, E_4$, and $E_6$, or by $q\frac{d}{dq}$ and by $E_2$. Noting then that $A_1(q) = \frac{1 - E_2(q)}{24}$, the recurrence relation from Corollary \ref{divisor_recurrence} implies that each $A_k(q)$ lies in this ring, and hence the conclusion follows.

\section{Applications}\label{section_applications}

The functions $A_k(q)$ and $C_k(q)$ arise naturally in the following problem in enumerative algebraic geometry.

Let $A \subset \mathbb{P}^N$ be a generic polarized abelian surface. There are then a finite number of hyperplane sections which are hyperelliptic curves of geometric genus $g$ and having $\delta = N - g + 2$ nodes. The number of such curves, $N(g, \delta)$ is independent of the choice of $A$ and these numbers can be assembled into a generating function
\[
F(x,u) = \sum_{g, \delta} N(g,\delta) x^g u^\delta.
\]
The coefficient of $x^g$ in $F$ is given by a certain homogeneous polynomial of degree $g-1$ in the functions $A_k(u^4)$ and $C_k(u^2)$.

This formula is derived by relating hyperelliptic curves on $A$ to genus zero curves on the Kummer surface $A/\pm1$. The latter is computed using orbifold Gromov-Witten theory, the Crepant resolution conjecute \cite{crc} and the Yau-Zaslow formula \cite{yz} \cite{bl}. This will be described further in the second author's thesis.

\bibliographystyle{amsplain}
\bibliography{cheb}

\end{document}